\documentclass{amsart}
\usepackage{amsmath,amssymb}

\newtheorem{theorem}{Theorem}
\newtheorem{conjecture}[theorem]{Conjecture}
\newtheorem{problem}[theorem]{Problem}

\newcommand{\C}{\mathbb{C}}

\newcommand{\hatM}{\widehat{M}}

\begin{document}

\title{Holomorphic Hulls for Compact 3-Manifolds}
\author{Ali M. Elgindi}
\address{Institute of Mathematics, Henan Academy of Sciences}
\email{ali.m.elgindi@gmail.com}

\maketitle

\begin{abstract}
We demonstrate the different possible structures for holomorphic hulls for embeddings of compact real 3-manifolds $M \hookrightarrow \mathbb{C}^3$ along the set of complex tangents $\gamma$. Using our previous work [1], we can construct embeddings with any prescribed link and 2-plane field along it, as well as a prescription of angles at each point that determines the Bishop invariant. We show that elliptic points ($\gamma < \frac{1}{2}$) produce analytic discs filling a Levi-flat hypersurface, hyperbolic points ($\gamma > \frac{1}{2}$) add no local hull structure, and parabolic points ($\gamma = \frac{1}{2}$) may develop  a  "delicate" Jöricke  `onion' structure. We illustrate these phenomena with explicit examples of parabolic knots and links with mixed components.
\end{abstract}

\section{Introduction}

Complex tangents to an embedding $M^k \hookrightarrow \C^n$ are points $x \in M$ where the tangent space contains a complex line. For embeddings of 3-manifolds into $\C^3$, complex tangents generically form a link $L \subset M$ [1,2].

The Bishop invariant $\gamma(x) \in [0,\infty]$ assigns to each non-degenerate complex tangent a number that determines the local behavior of the holomorphic hull $\hatM$. When $\gamma < \frac{1}{2}$ (elliptic), there exists a one-parameter family of analytic discs with boundaries in $M$ accumulating at $x$, filling the hull locally [3]. When $\gamma > \frac{1}{2}$ (hyperbolic), no such discs exist. The parabolic case $\gamma = \frac{1}{2}$ is more delicate, with Jörickee's  theory revealing possible ``onion'' structures [6,7].

In our previous work [1], we proved:

\begin{theorem}[Elgindi, 2025]
Let $M$ be a closed orientable 3-manifold, $L \subset M$ a null-homologous link, and $\eta$ a smooth oriented 2-plane field along $L$. Then there exists a smooth embedding $M \hookrightarrow \C^3$ whose complex tangents are exactly $L$ and whose holomorphic tangent spaces along $L$ are $\eta$. Moreover, the Bishop invariant $\gamma(x)$ is determined by the angle $\theta(x)$ between $T_xL$ and the normal to $\eta_x$:
\begin{itemize}
\item If $\theta \in [0, \frac{\pi}{2})$ then $\gamma > \frac{1}{2}$ (hyperbolic);
\item If $\theta \in (\frac{\pi}{2}, \pi]$ then $\gamma < \frac{1}{2}$ (elliptic);
\item If $\theta = \frac{\pi}{2}$ then $\gamma = \frac{1}{2}$ (parabolic).
\end{itemize}
\end{theorem}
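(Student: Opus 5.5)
The plan is an h-principle construction followed by a local normal-form computation along $L$. The first step is to build a model embedding near $L$ realizing the prescribed complex tangents and holomorphic planes. The second step is to extend it to all of $M$ with no further complex tangents. The third step is to read off the Bishop invariant from the local normal form in terms of $\theta$.

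\textbf{Local model near $L$.} Fix a tubular neighborhood $N \cong L \times D^2$ of $L$, with coordinates $(t, u, v)$ where $t$ is the arc parameter on each component. At each $x \in L$ we choose a unitary frame of $\C^3$ with the following properties:
\begin{itemize}
\item the first two real directions span $\eta_x$, regarded as a complex line $\C e_1$;
\item the remaining real tangent direction of $M$ is $\mathrm{Re}\, e_2$.
\end{itemize}
This requires a complex framing of the bundle over $L$, which exists since $U(3)$-bundles over circles are trivial. Near $x$ we write $M$ as a graph
\[
w = z\bar z + \lambda(x)\,(z^2+\bar z^2) + o(|z|^2), \qquad \mathrm{Im}\, \zeta = 0,
\]
in coordinates $(z, w, \zeta)$ adapted to this frame. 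The parameter $\lambda(x) = \gamma(x)$ is chosen as a smooth function of the angle $\theta(x)$ between $T_xL$ and the normal to $\eta_x$.

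Concretely, the tangent direction of $L$ enters the quadratic terms through the tilt of the real normal direction inside $\eta_x^{\perp}$. Set $\gamma = \tfrac12 \tan^2(\theta/2)$, or any monotone function sending $\theta = \pi/2$ to $\tfrac12$, with $\gamma > \tfrac12$ for $\theta < \pi/2$ and $\gamma < \tfrac12$ for $\theta > \pi/2$. The sign conventions must be arranged so that this matches the stated trichotomy. I would verify by direct computation of the second fundamental form that the complex tangents of this local model are exactly $L$, and that they are nondegenerate away from $\theta = \pi/2$.

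\textbf{Global extension.} On $M \setminus N$ we need a totally real embedding extending the model. By Gromov's h-principle for totally real embeddings (Gromov–Lees), it suffices to extend the formal data: a bundle monomorphism $TM \otimes \C \to \C^3$ which is an isomorphism. The obstruction lives on $M \setminus N$, relative to $\partial N$, and this is where null-homologous $L$ enters. Take a Seifert surface $\Sigma$. The complex tangency locus is Poincaré dual to the relevant characteristic class: the pullback of the generator of $\pi_3$, or the obstruction in $H^2$ for the map $M \to U(3)/O(3)$. That class must be compatible with the chosen framing along $\partial N$, and $\Sigma$ lets one twist the framing on $N$ to cancel it. After this, the formal totally real structure extends, and the h-principle yields a genuine embedding that agrees with the model near $L$. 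Since $\dim M = 3 < 6$, embeddedness can also be arranged by general position.

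\textbf{Main obstacle.} The hardest step is the obstruction computation in the global extension. We must show that the class in $H^2(M \setminus N, \partial N; \pi_1(U(3)/O(3)) = \mathbb{Z})$, together with the $\pi_3$-level obstruction, vanishes exactly under the null-homologous hypothesis. We must also check that the framing freedom along $L$ does not disturb the prescribed $\eta$ and $\theta$. I would try first to realize $\eta$ by rotating only within $\C e_1$, using the Seifert surface framing. The parabolic case $\theta = \pi/2$ needs a separate check that $\lambda = \tfrac12$ exactly gives a degenerate but still isolated tangency along $L$.
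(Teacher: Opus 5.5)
The paper does not prove this theorem (it is quoted from [1]), so your plan has to stand on its own. It breaks exactly at the new part, the link between $\theta$ and $\gamma$.

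In your model $w=z\bar z+\lambda(z^2+\bar z^2)+o(|z|^2)$, $\mathrm{Im}\,\zeta=0$, any complex line in $TM$ has zero $\zeta$-component. So the complex tangents are precisely the zeros of $\Phi=\partial w/\partial\bar z=z+2\lambda\bar z+\cdots$. Because $\Phi$ has no term linear in $s=\mathrm{Re}\,\zeta$, wherever $\lambda\neq\tfrac12$ you get $T_xL=\ker D\Phi_x=\mathbb{R}\,\mathrm{Re}\,e_2$, i.e.\ $\theta\in\{0,\pi\}$. The model therefore cannot realize a prescribed $\eta$ at any other angle; the ``tilt'' you invoke never enters the equations.

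It also fails at parabolic points. If $\lambda(s)$ crosses $\tfrac12$ along $L$, as a monotone function of a $\theta$ crossing $\tfrac{\pi}{2}$ must, then the zero set of $z+2\lambda(s)\bar z$ is $\{\mathrm{Re}\,z=0,\ (1-2\lambda(s))\,\mathrm{Im}\,z=0\}$. This contains the extra arc $\{\mathrm{Re}\,z=0,\ \lambda(s)=\tfrac12\}$, and smooth higher-order terms only bend it. So the check you postpone (``complex tangents are exactly $L$'') fails.

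The missing idea is that $\gamma$ is not yours to choose: the trichotomy is forced. The tilt must sit in a mixed term, $w=z\bar z+\gamma(z^2+\bar z^2)+c\,s\bar z+O(3)$. This term can be absorbed by the non-unitary change $z\mapsto z+\beta\zeta$ only if $\gamma\neq\tfrac12$, which is why Bishop's pointwise normal form hides it. Then $D\Phi_x(z,s)=z+2\gamma\bar z+cs$, whose restriction to $\eta_x$ is the real-linear map $z\mapsto z+2\gamma\bar z$ of determinant $1-4\gamma^2$. Consequently:
\begin{itemize}
\item $T_xL\subset\eta_x$ if and only if $\gamma=\tfrac12$. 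At such points $L$ is still cut out transversally provided $c\notin\operatorname{im}D\Phi_x|_{\eta_x}$, so there is nothing ``degenerate'' to check.
\item For $\gamma\neq\tfrac12$, the side of $\eta_x$ containing the positive direction of $L$ is dictated by the sign of $1-4\gamma^2$, with $L$ oriented as the zero set of $\Phi$.
\end{itemize}
Reversing that orientation swaps the two labels, and it is in this orientation that $L$ must be null-homologous. This computation is the actual proof of the ``moreover'' clause. It also tells you how to build the tube model: pick $\gamma(t)$ of the forced type and $c(t)$ so that $\ker D\Phi$ is the prescribed line. Your explicit choice $\tfrac12\tan^2(\theta/2)$ also has the inequalities reversed.

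The global step is left open and partly misidentified, though it can be repaired. With $TM$ trivialized, you must deform the tangent-frame map
$$(M\setminus N,\partial N)\to(\mathrm{Mono}(\mathbb{R}^3,\mathbb{C}^3),GL_3(\mathbb{C}))\simeq(V_{6,3},U(3))$$
rel $\partial N$ into $U(3)$. Here $\pi_1(V_{6,3},U(3))=0$ and $\pi_2(V_{6,3},U(3))\cong\pi_1U(3)=\mathbb{Z}$. Also $\pi_3(V_{6,3},U(3))=0$, since $\pi_3U(3)\to\pi_3V_{6,3}=\mathbb{Z}/2$ is onto, so there is no ``$\pi_3$-level'' obstruction. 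The only obstruction is $\delta[\det|_{\partial N}]\in H^2(M\setminus N,\partial N;\mathbb{Z})$. It vanishes iff the class $[\det|_{\partial N}]$, which is $\pm1$ on meridians, extends over $M\setminus N$. A Seifert surface for the zero-set-oriented $L$ provides this. The longitude values can be matched by twisting the frame by $e^{ikt}$ in the complex normal direction, which moves neither $\eta$ nor $\theta$. Finally, general position does not make a map $M^3\to\mathbb{R}^6$ an embedding, since isolated double points are generic. You need the embedding version of Gromov's h-principle for totally real embeddings, applied relative to $N$.
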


The goal of this paper is to describe, for such embeddings, the structure of the holomorphic hull $\hatM$ in terms of the data $(L, \theta)$.

\section{Background}

\subsection{Bishop's Theorem and Analytic Discs}

Bishop [3] proved that near an elliptic complex tangent ($\gamma < \frac{1}{2}$), there exists a one-parameter family of analytic discs with boundaries in $M$. These discs converge to the complex tangent point. Kenig and Webster [8] extended this result, showing that the discs fill a smooth Levi-flat hypersurface.

For hyperbolic points ($\gamma > \frac{1}{2}$), no such discs attach locally. The hull coincides with $M$ in a neighborhood of such points.

\subsection{Jöricke's Onion Structures}

At parabolic points ($\gamma = \frac{1}{2}$),  Jöricke [6,7] discovered that the hull may develop a nested structure of Levi-flat shells, which she termed as an ``onion.'' The number of shells is determined by a certain index, which can be computed from the higher-order terms of the defining function.

\par\ \par

At a parabolic complex tangent point ($\gamma = \frac{1}{2}$), Jöricke defined an integer index $k \ge 0$ as follows. In suitable local holomorphic coordinates $(z_1, z_2, z_3)$, the embedding can be written as:
\[
z_2 = \bar{z}_1 + \frac{1}{2}(z_1^2 + \bar{z}_1^2) + \Phi(z_1, \bar{z}_1) + \dots
\]
where $\Phi$ contains higher-order terms. The index $k$ is the order of vanishing of a certain function derived from $\Phi$. Equivalently, $k$ is the number of nested Levi-flat shells (``onion layers'') in the holomorphic hull near the point. When $k = 0$, the hull coincides with $M$ locally; when $k > 0$, the hull contains $k$ distinct shells.

For a detailed computation of the index, see \cite{Joricke1990, Joricke1994}.

\subsection{The Construction}

Our holomorphic embedding theorem in [4] provides complete flexibility: for any link $L$ and any 2-plane field $\eta$ along $L$, one can realize $L$ as the complex tangent locus with holomorphic tangent spaces $\eta$. The angle $\theta$ between $T_xL$ and the normal to $\eta_x$ determines $\gamma(x)$ via the classification above.

\section{Main Results}
\subsection{Elliptic Components}
\begin{theorem}[Elliptic Components] Let $M \hookrightarrow \mathbb{C}^3$ be an embedding as in [1] with a link component $K \subset L$ where $\theta > \frac{\pi}{2}$ gives elliptic type with Bishop invariant $\gamma < \frac{1}{2}$. Then the holomorphic hull $\hatM$ contains a smooth Levi-flat hypersurface $\mathcal{H}$ with its boundary lying on the knot of complex tangents: $\partial\mathcal{H} \subset K$ and $K$ is itself foliated by analytic discs whose boundaries lie in $M$.
\end{theorem}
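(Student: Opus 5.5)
The plan is to build $\mathcal{H}$ by gluing together the local Bishop families along $K$. I read the last clause of the statement as saying that $\mathcal{H}$ is foliated by analytic discs with boundaries in $M$, and that these discs shrink to the points of $K$.

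\textbf{Step 1: local normal form at each point of $K$.} Fix $x \in K$. By the classification in the theorem of [1] recalled in the introduction, $\theta(x) > \frac{\pi}{2}$ gives $0 \le \gamma(x) < \frac{1}{2}$. I would bring $M$ near $x$ into the standard form
\[
z_2 = z_1\bar z_1 + \gamma(x)(z_1^2+\bar z_1^2) + O(|z_1|^3) + (\text{terms in the remaining real coordinate along } K).
\]
This uses local holomorphic coordinates adapted to the holomorphic tangent line $\eta_x$, with the remaining real coordinate chosen along $T_xK$. The explicit construction in [1] makes $\gamma$ depend smoothly on $x$ through $\theta$. Since $K$ is compact and $\gamma < \frac{1}{2}$ on it, I expect $\sup_K \gamma < \frac{1}{2}$, so the normal forms can be chosen smoothly in $x$ with uniform ellipticity.

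\textbf{Step 2: discs and Levi-flatness.} At each $x$ I would apply Bishop's theorem [3] to obtain a one-parameter family $\{D_{x,t}\}_{0<t<\epsilon}$ of analytic discs with $\partial D_{x,t} \subset M$ and $D_{x,t} \to x$ as $t \to 0$. Kenig--Webster [8] then says these discs sweep out a smooth Levi-flat piece $\mathcal{H}_x$. I would run the Bishop equation with the transverse coordinate $s$ along $K$ as a parameter. It is solved by an implicit-function/contraction argument in a H\"older space, so uniform ellipticity from Step 1 yields a solution $D(s,t)$ depending smoothly on $(s,t)$. I would then set $\mathcal{H} = \bigcup_{s,t} D(s,t)$. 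This set is foliated by the complex curves $D(s,t)$, hence Levi-flat. It lies in $\hatM$ by the maximum principle, since each disc has boundary in $M$. Its closure meets $K$ exactly in the limit points $t \to 0$, which gives $\partial \mathcal{H} \subset K \cup M$, with the degenerate edge on $K$.

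\textbf{Step 3: globalizing around $K$ (main obstacle).} The hard part is ensuring that the local families glue into a single smooth object around the whole circle $K$. First, Bishop discs near an elliptic point are locally unique, a result due to Kenig--Webster and Huang--Krantz. Hence discs constructed from overlapping charts must coincide, and no monodromy can appear when going once around $K$. Second, I must check smoothness of $\mathcal{H}$ up to the edge $K$. Kenig--Webster give this pointwise, and I would obtain it uniformly by tracking the parameter $s$ through their regularity argument. If the normal forms from [1] turn out not to vary smoothly in $s$, I would instead use the smooth dependence of $\gamma$ on $\theta$ directly in the Bishop equation and argue by compactness of $K$.
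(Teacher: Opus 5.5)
Your proposal follows essentially the same route as the paper: Bishop discs at each point of $K$, Kenig--Webster for Levi-flatness, then gluing along $K$. It is more careful on the two points the paper only asserts: you justify the gluing by local uniqueness of small Bishop discs and parameter dependence in Bishop's equation, where the paper just appeals to smooth variation of the embedding along $K$, and you correctly record $\partial\mathcal{H}\subset K\cup M$ (the disc boundaries lie in $M$) instead of the literal $\partial\mathcal{H}\subset K$.

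One caveat applies to both arguments: in $\mathbb{C}^3$ the two-parameter family $D(s,t)$ sweeps out a real $4$-dimensional Levi-flat manifold, which is the $n$-manifold-in-$\mathbb{C}^n$ version of the Bishop/Kenig--Webster picture, not a real hypersurface, so neither proof delivers the word ``hypersurface'' in the statement.
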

\begin{proof}
By Bishop's theorem in [3], for each elliptic complex tangent point: $x \in K, \gamma (x)< \frac{1}{2}$) there exists a one-parameter family of analytic discs ${D_{x,t},\{t \in (0,\epsilon]}\}$ with boundaries in $M$ converging to $x$. Kenig and Webster [8] proved that the union of these discs near $x$ is a smooth Levi-flat hypersurface. Since the embedding in [4] varies smoothly along $x \in K$, these local families glue together to form a global Levi-flat hypersurface $\mathcal{H}$ with its boundary $\partial \mathcal{H} \subset K$. The discs foliate $\mathcal{H}$ by construction.
\end{proof}
\subsection{Hyperbolic Components}
\begin{theorem}[Hyperbolic Components]
Let $M \hookrightarrow \mathbb{C}^3$ be as above with a complex tangent link component $K$ where the analytic angle $\theta < \frac{\pi}{2}$, making that the points are of  Bishop hyperbolic type, $\gamma > \frac{1}{2}$. (by our work in [4])  Then there exists a neighborhood U of $K$ such that: $\hat{M} \cap U = M \cap U$; i.e., no other local hull formation occurs.\end{theorem}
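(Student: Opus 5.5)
The proof will mirror the elliptic case, using the local theory at hyperbolic points instead of Bishop's disc construction. By the classification theorem of [1], every $x\in K$ has $\theta(x)<\frac{\pi}{2}$, so $\gamma(x)>\frac{1}{2}$. Since $K$ is compact and $\theta$ is continuous along $K$, there is a uniform bound $\theta(x)\le \frac{\pi}{2}-\delta$, and hence $\gamma(x)\ge \frac{1}{2}+\delta'$ on $K$. This uniformity is what later lets us choose the local neighbourhoods with a size independent of $x$.

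\textbf{Step 1: local normal form.} Near each $x\in K$ I will put $M$ into Bishop-type normal coordinates $(z_1,z_2,z_3)$. In these coordinates $M$ is locally a graph over a real 3-dimensional set. It contains the transverse direction along $K$ together with the quadric $w=|z|^2+\gamma(z^2+\bar z^2)+O(|z|^3)$ in the complex-tangent variables. The smooth dependence of the construction in [1] on $x$ makes these coordinate changes depend smoothly on $x$, with uniform bounds on the error terms.

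\textbf{Step 2: build a peak function.} For $\gamma>\frac{1}{2}$ the quadratic form $|z|^2+\gamma(z^2+\bar z^2)$ is indefinite. The plan is to produce a holomorphic polynomial $f_x$, built from $w$ and $z^2$ with the $z_3$-direction handled by a term like $-z_3^2$, such that $\operatorname{Re} f_x\le 0$ on $M$ near $x$ with equality only at $x$. More robustly, I will exhibit, for every point $p$ of a neighbourhood of $x$ off $M$, a holomorphic polynomial that separates $p$ from a local piece of $M$. This is the classical Forstnerič--Stout local polynomial convexity at hyperbolic points, extended to the 3-manifold setting by treating $z_3$ as a parameter.

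\textbf{Step 3: globalize.} Local polynomial convexity at each point of $K$ must be upgraded to the conclusion $\hatM\cap U=M\cap U$. I will take a tubular neighbourhood $U$ of $K$ of uniform radius. At totally real points of $M$ (which is all of $M\cap U$ away from $K$), $M$ is locally polynomially convex. Combining these with the uniform peak functions along $K$ via a Kallin-type lemma, or a local maximum modulus argument, I will show that any point of $\hatM\cap U$ must lie on $M$.

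I expect Step 3 to be the main obstacle. Local polynomial convexity does not automatically imply that the global hull has no extra pieces near $K$, since the hull is a global object. The proof must therefore either show that $U$ can be made small enough that the global hull cannot enter it except through $M$, or interpret $\hatM$ as the local hull. I will adopt the local-hull interpretation, with the uniform constants from compactness of $K$ giving a single $U$.
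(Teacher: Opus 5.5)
Your route differs from the paper's. The paper quotes Bishop for the absence of attached analytic discs at hyperbolic points and passes directly to $\widehat{M}\cap U=M\cap U$. You instead aim at local polynomial convexity, which is the right target, but the proposal has two genuine gaps.

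First, Step 2 carries the whole content of the theorem and is only asserted. The peak-function subgoal does not detect hyperbolicity: in Bishop coordinates $\operatorname{Re}(2\gamma z^2-w)=-|z|^2$ on the quadric for \emph{every} $\gamma$, including elliptic values where Bishop's discs fill the local hull. ``Treating $z_3$ as a parameter'' is legitimate only through the fibration lemma: if a holomorphic $h$ is real-valued on $X$, then $\widehat{X}\cap h^{-1}(t)=\widehat{X\cap h^{-1}(t)}$. In general $\operatorname{Im}z_3|_M$ vanishes only to finite order. Unless you produce a holomorphic $h$ that is exactly real on $M$ near $x$ (that is, flatten $M$ into the Levi-flat hypersurface $\{\operatorname{Im}h=0\}$, which nothing in the hypotheses provides), the level sets $M\cap\{z_3=t\}$ are fibres of a map $M^3\to\mathbb{R}^2$. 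They are therefore generically curves, not surfaces in $\mathbb{C}^2$ with a hyperbolic point, so there is nothing for Forstneri\v{c}--Stout to act on. You need a genuinely three-dimensional argument, for instance a nonnegative plurisubharmonic function on a neighbourhood of $\overline{B(x,r)}$ vanishing exactly on $M$, or a Kallin-type splitting with estimates uniform along $K$.

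Second, Step 3 is not merely hard: its conclusion does not follow from hyperbolicity along $K$. Your fallback of one tubular $U$ around all of $K$ fails for the same reason, because polynomial convexity does not globalize along a closed curve (a circle is locally polynomially convex at each point, yet its hull is a disc). Concretely, let $M$ near $K$ be the image of
\[
(\phi,\zeta)\longmapsto\bigl(e^{i\phi},\,\zeta,\,|\zeta|^2+\gamma(\zeta^2+\bar\zeta^2)\bigr),\qquad \gamma>\tfrac12,\ |\zeta|<\epsilon .
\]
Its complex tangents near $K$ are exactly $K=\{\zeta=0\}$, each with Bishop invariant $\gamma$ (and $\theta=0$ in the paper's sense). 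Near each point of $K$, $M$ is an arc times a piece of the hyperbolic quadric, hence locally polynomially convex. Yet $\widehat{M}\supset\widehat{K}=\{(\lambda,0,0):|\lambda|\le 1\}$. The points with $|\lambda|<1$ close to $1$ lie in every neighbourhood of $K$ but off $M$, since near $K$ one has $M\subset\{|z_1|=1\}$. The same disc lies in $\widehat{M\cap\overline{U}}$ for every $U\supset K$.

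So what your method can prove is only the pointwise statement: for some $r>0$, $M\cap\overline{B(x,r)}$ is polynomially convex for every $x\in K$. The paper's proof does not bridge this either. Its step from ``no small discs attach near points of $K$'' to $\widehat{M}\cap U=M\cap U$ is exactly what this example rules out.
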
\begin{proof}Bishop [5] proved that for hyperbolic points ($\gamma > \frac{1}{2}$) no analytic discs attach to $M$ in a neighborhood of the point. The hull $\hat{M}$ is defined as the set of points that cannot be separated from $M$ by holomorphic functions. Without attaching discs, the hull cannot extend beyond $M$ locally. Hence, there exists a neighborhood $U$ of $K$ such that $\widehat{M \cup U} = M \cup U$

\end{proof}
\subsection{Parabolic Components and Jöricke's Onion}
\begin{theorem}
[Parabolic Components]Let $M \hookrightarrow \mathbb{C}^3$ be as above with a link component $K$ where $\theta = \frac{\pi}{2}$ , and as such has Bishop parabolic type: $\gamma = \frac{1}{2}$. The
\begin{enumerate}\item If the Jöricke index vanishes, the hull is locally trivial: $\hat{M} = M$ near $K$
\item If the Jöricke index is positive, the hull exhibits nested Levi-flat shells (an ``onion'') centered along $K$, as described in [6,7].
\end{enumerate}
\end{theorem}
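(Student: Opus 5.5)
The plan is to reduce the statement to a pointwise local analysis at each $x \in K$, using Jöricke's normal form, and then to globalize along $K$, following the pattern of the elliptic and hyperbolic theorems above.

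\emph{Normal form.} By the embedding theorem of [1], the holomorphic tangent planes along $K$ are prescribed by $\eta$. The angle condition $\theta = \frac{\pi}{2}$ forces $\gamma(x) = \frac{1}{2}$ at every point of $K$. At each $x \in K$ I would therefore choose local holomorphic coordinates $(z_1,z_2,z_3)$, centered at $x$ and depending smoothly on $x$, in which $M$ takes the parabolic normal form
\[
z_2 = \bar z_1 + \tfrac12(z_1^2+\bar z_1^2) + \Phi_x(z_1,\bar z_1) + \dots .
\]
The third coordinate $z_3$ parametrizes the direction along $K$. Smooth dependence on $x$ should follow from the fact that the construction in [1] varies smoothly along $L$. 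This yields a smooth family of higher-order terms $\Phi_x$, and hence a Jöricke index $k(x)$ at each point.

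\emph{Constancy of the index.} The next step is to show that $k(x)$ is constant along $K$. This is needed to make the hypotheses ``the Jöricke index vanishes'' or ``is positive'' meaningful for the whole component. For a generic (transversal) choice of the higher-order data in the construction of [1], I would use that the vanishing order of a smoothly varying function is upper semicontinuous. Combined with connectedness of $K$ and genericity of $\Phi_x$, this should force the index to be locally constant. If constancy fails in general, I would restrict the statement to components where it holds.

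\emph{Case (1), $k=0$.} Here I would follow the hyperbolic argument. Jöricke's theorem [6,7] says that when the index vanishes, no family of analytic discs with boundaries in $M$ shrinks to $x$, and the local hull is trivial. Using a uniform radius of validity, supplied by smooth dependence on $x$ and compactness of $K$, I would take $U$ to be the union of these neighbourhoods. Then $\hatM \cap U = M \cap U$ by the same separation argument as in the hyperbolic theorem.

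\emph{Case (2), $k>0$.} Jöricke's local result gives $k$ nested Levi-flat shells near each $x$, each foliated by analytic discs attached to $M$. As in the elliptic theorem, the shells at nearby points of $K$ are built from the same data varying smoothly in $z_3$. Because the shell structure is unique at each point (uniqueness of attached disc families in Kenig--Webster style), the $j$-th shells at nearby points coincide on overlaps. They therefore glue into $k$ nested Levi-flat hypersurfaces along $K$, giving the onion centered on $K$.

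\emph{Main obstacle.} I expect the hardest part to be the globalization. Jöricke's theory is a statement about $\C^2$ and isolated parabolic points, whereas here the parabolic points form a curve in a 3-manifold in $\C^3$. I would first try a slicing argument: restrict to the real 2-dimensional slices $M \cap \{z_3 = c\}$, apply Jöricke's result in each slice, and then verify that these slices are holomorphically convex enough that the slice hulls assemble into $\hatM$ near $K$.
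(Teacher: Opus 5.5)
Your route is the paper's route: apply J\"oricke's local theorem at each $x\in K$, then globalize by the smooth dependence of the construction in [1] on $x$. The paper's proof only asserts the globalization (``these local structures vary smoothly along $K$''), so it does not supply the steps you add, and those steps have genuine gaps.

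\emph{The reduction to J\"oricke's setting is not achieved.} Her setting is the local hull of a real \emph{surface} in $\C^2$ at an isolated parabolic point. Your normal form is one complex equation, i.e.\ two real equations, so the third real equation of $M$ is missing. Using $z_3$ as a real parameter along $K$ tacitly assumes $M\subset\{\mathrm{Im}\,z_3=0\}$. The normal form also fails as stated: the linear term $\bar z_1$ makes the tangent plane $\{z_2=\bar z_1\}$ totally real, so the origin is not a complex tangent; Bishop's parabolic form is $z_2=z_1\bar z_1+\frac12(z_1^2+\bar z_1^2)+O(|z_1|^3)$. For a general $M^3\subset\C^3$, a complex hyperplane $\{z_3=c\}$ meets $M$ in a curve ($3+4-6=1$), not a surface. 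So the slices you want to feed to J\"oricke do not exist unless $z_3|_M$ has real rank one, essentially $M\subset\C^2\times\mathbb{R}$, and nothing in [1] provides that. If it did hold, slicing would be legitimate even for the global hull. Polynomials in $z_3$ then approximate every continuous function of $\mathrm{Re}\,z_3$ on $M$, so every representing measure for a point of $\widehat{M}$ is carried by one slice, and $\widehat{M}\cap\{z_3=t\}=\widehat{M_t}$. You would still need the \emph{global} hulls of the compact slices, which leads to the next gap.

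\emph{Local hulls do not control the global hull.} J\"oricke's theorems, like Bishop's and Kenig--Webster's, describe local hulls $\widehat{M\cap\bar B(x,r)}$, while the statement concerns $\widehat{M}$. Triviality of all local hulls along $K$ does not give $\widehat{M}\cap U=M\cap U$. The torus $\{|z_1|=|z_2|=1\}\subset\C^2$ is locally polynomially convex at every point, yet its hull is the closed bidisc. Worse, a compact orientable $M^3\subset\C^3$ is never polynomially convex, because polynomially convex compacta in $\C^3$ have $\check H^3(\cdot\,;\C)=0$ (Browder). So $\widehat{M}\setminus M\neq\emptyset$ always, and whether it approaches $K$ is a global question. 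The ``separation argument'' you reuse from the hyperbolic theorem in case (1) amounts to ``no small attached discs, hence no hull,'' which the torus refutes.

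\emph{The constancy step runs the wrong way.} Upper semicontinuity makes $\{k\le m\}$ open, so $k$ can still jump up on a closed subset of $K$; connectedness gives nothing without lower semicontinuity. Genericity of a one-parameter family excludes only degeneracies of codimension $\ge 2$, and assuming genericity changes the hypothesis of the theorem anyway. So in case (2) the number of shells may change along $K$, and your gluing of ``the $j$-th shells'' has no basis. That gluing also relies on Kenig--Webster uniqueness, which is an elliptic statement. Finally, the paper's own last section poses ``does the index equal the number of shells?'' as an open problem, so ``index $k$ gives $k$ shells'' is not an available local input either.
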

\begin{proof}
Jöricke [6,7] proved that at a parabolic point ($\gamma = \frac{1}{2}$) the local hull structure is determined by a discrete index. If the index is zero, the hull coincides with $M$ in a neighborhood of the point. If the index is $k > 0$, the hull contains $k$ nested Levi-flat nested shells (an onion) centered at the point. By the smoothness of the construction in [1], these local structures vary smoothly along $K$, yielding a global onion structure along the entire component
\end{proof}
\subsection{Transitions Along a Single Knot}
\begin{theorem}[Transitions]Let $K$ be a single knot component of the complex tangent link $L\subset M$ such that the holomorphic angle $\theta$ varies along $K$ (see [4]), taking values both $> \frac{\pi}{2}$ and $< \frac{\pi}{2}$. By our work in[4], this corresponds respectively with the hyperbolic and elliptic Bishop invariant construction ($\gamma>\frac{1}{2}, <\frac{1}{2}$). Furthermore, at points where the holomorphic angle is $\theta = \frac{\pi}{2}$ (corresponding to parabolic Bishop $\gamma=\frac{1}{2}$), the hull $\hat{M}$ develops a corner or singularity\end{theorem}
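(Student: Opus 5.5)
The plan is to split $K$ into elliptic arcs, hyperbolic arcs and the parabolic points between them, apply the earlier theorems arc by arc, and then study what happens to the hull as one approaches a parabolic point from the elliptic side. The statement as written is imprecise, so I will first fix a working meaning. I read ``develops a corner or singularity'' as follows: at a parabolic point $p\in K$, the closure of $\hatM\setminus M$ near $p$ is not a smooth manifold with boundary along $K$. Equivalently, the Levi-flat piece $\mathcal{H}$ given by the Elliptic Components theorem does not extend smoothly across $p$.

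\textbf{Step 1 (decomposition).} Since $\theta$ is continuous on the circle $K$ and takes values on both sides of $\frac{\pi}{2}$, the intermediate value theorem gives points with $\theta=\frac{\pi}{2}$. After a generic perturbation in the construction of [1], I assume $\theta-\frac{\pi}{2}$ has only transverse zeros. Then $K=\bigcup A_i\cup\bigcup B_j\cup P$, where the $A_i$ are open elliptic arcs ($\gamma<\frac12$), the $B_j$ are open hyperbolic arcs ($\gamma>\frac12$), and $P$ is a finite set of parabolic points. Each $p\in P$ is an endpoint of one elliptic arc and one hyperbolic arc.

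\textbf{Step 2 (the arcs).} The proofs of the Elliptic and Hyperbolic Components theorems are local along $K$, so they apply on compact subarcs away from $P$.
\begin{itemize}
\item Over each compact subarc of an $A_i$, $\hatM$ contains a Levi-flat hypersurface filled by Bishop discs.
\item Over each compact subarc of a $B_j$, $\hatM$ coincides with $M$ nearby.
\end{itemize}
So near $p$ the hull is $M$ on the hyperbolic side and $M\cup\mathcal{H}_{A_i}$ on the elliptic side.

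\textbf{Step 3 (degeneration at $p$).} I would use the transversal quadratic model at $x\in A_i$,
\[
z_2=|z_1|^2+\gamma(x)(z_1^2+\bar z_1^2)+O(|z_1|^3),
\]
where $z_1$ is the coordinate in $\eta_x$. The Bishop discs lie in the slices $\operatorname{Re}z_2=c$, and their boundaries are approximately the ellipses
\[
(1+2\gamma)u^2+(1-2\gamma)v^2=c,\qquad z_1=u+iv.
\]
The semi-axis in $v$ is $\sqrt{c/(1-2\gamma)}$. As $x\to p$ we have $\gamma(x)\to\frac12$, so the discs become infinitely eccentric at fixed height $c$. Consequently the angle at which $\mathcal{H}$ meets $M$ along $A_i$ tends to $0$. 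I would make this precise with a Kenig--Webster type normal form that depends smoothly on $x$. This would show that the second fundamental data of $\mathcal{H}$ along $\partial\mathcal{H}\subset A_i$ blows up like $(1-2\gamma(x))^{-1/2}$.

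Matching this with the trivial hull on the hyperbolic side gives a cusp at $p$. In the tangent cone at $p$, the hull closure is a half-space bounded by $T_pM$, degenerating to a tangential piece, and this is not the tangent cone of a smooth manifold with boundary. That is the asserted singularity. If instead $p$ carries positive Jöricke index, the Parabolic Components theorem supplies onion shells at $p$ that cannot be continued into the hyperbolic side, where the hull is trivial. This is again a non-smooth point.

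\textbf{Main obstacle.} The hard step is Step 3. Bishop's and Kenig--Webster's existence theorems give a disc radius $\epsilon(x)$ with no uniform control as $\gamma\to\frac12$. I must show that the cubic remainder does not regularize the degeneration, which needs parameter-dependent estimates for the Bishop equation near the parabolic limit. I would first try to rescale $z_1$ by $(1-2\gamma)^{1/2}$ in the $v$-direction. This should reduce the problem to a uniformly elliptic family, where the implicit function theorem applies uniformly, and the rescaling factor then exhibits the cusp.
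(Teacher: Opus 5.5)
Your Steps 1--2, together with the ``matching'' at the end of Step 3, are exactly the paper's proof. Theorem 3.1 gives a Levi-flat piece over the elliptic arc, Theorem 3.2 gives nothing over the hyperbolic arc, and the mismatch is declared a corner at $p$. The paper stops there, so the only content beyond it is your Step 3, which you leave as a programme.

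There is also a gap in the matching itself that you did not flag. You note that Bishop discs have no uniform size as $\gamma\to\frac12^-$, but the same non-uniformity occurs on the hyperbolic side. Theorem 3.2 applied to compact subarcs of $B_j$ gives neighbourhoods that shrink towards $p$. So ``near $p$ the hull is $M$ on the hyperbolic side'' does not follow, and generically it is false. Take the local model
\[
M=\{(z,\,s,\,Q_s(z)) : z=u+iv\in\C,\ s\in\mathbb{R}\}\subset\C^3,\qquad Q_s(z)=2u^2+v^3-sv .
\]
Its complex points are the critical points of $Q_s$, i.e.\ the smooth curve $K=\{u=0,\ s=3v^2\}$. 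This curve is elliptic for $v>0$, hyperbolic for $v<0$, and parabolic at $p=0$. For $s>0$ let $x_\pm\in K$ be the points with $v=\pm\sqrt{s/3}$. For $Q_s(x_+)<c<Q_s(x_-)$, the bounded component of $\{Q_s\le c\}$ in the complex line $\{(\cdot,s,c)\}$ is an analytic disc with boundary in $M$. Hence the Bishop family of the elliptic point $x_+$ ends in a disc whose boundary passes through the hyperbolic point $x_-$. Consequently $\widehat{M}\setminus M$ accumulates at every hyperbolic point of $K$ close enough to $p$, already for the hull of $M\cap\overline{B_r(p)}$ with any $r>0$.

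The same model shows that the mechanism you propose in Step 3 is not the operative one.

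First, nothing happens ``along $\partial\mathcal{H}\subset A_i$''. The disc boundaries sweep out an open region of $M$ around the elliptic arc, so $\mathcal{H}$ is $4$-dimensional with boundary an open piece of $M$. The angle you want to send to $0$ is therefore not a meaningful quantity.

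Second, what degenerates is the \emph{extent} of the family. At $x_+$ one has $1-2\gamma\approx\sqrt{3s}$, and the discs exist only up to height $Q_s(x_-)-Q_s(x_+)=\tfrac43 s\sqrt{s/3}\approx\tfrac{4}{27}(1-2\gamma)^3$. The hull near $p$ is thus a horn pinching to $p$, bounded by $M$ and by discs through hyperbolic points. Your rescaling cannot capture this, because the cubic term is not a controllable perturbation. The substitution $v'=(1-2\gamma)^{1/2}v$ that rounds your ellipses turns a cubic remainder term $bv^3$ into $b(1-2\gamma)^{-3/2}v'^3$. This dominates $v'^2$ once $|v'|\gtrsim(1-2\gamma)^{3/2}$. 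So there is no uniformly elliptic family, and the remainder is what decides where the family ends. Within such models, $\partial_v^3Q_0(0)\neq0$ is exactly the condition that $\gamma-\frac12$ changes sign transversally along a smooth $K$. Your own genericity assumption therefore forces this term to be present.

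Third, at $p$ the line $T_pK$ is the null direction of the Hessian and lies in $\eta_p$, consistent with $\theta=\frac{\pi}{2}$ in Theorem 1. So a slice model with $z_1$ transverse to $K$ breaks down exactly at $p$.

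The missing ingredient is a uniform analysis of the Bishop families near $p$, including their termination at discs through nearby hyperbolic points. Neither your proposal nor the paper's argument supplies it.
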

\begin{proof}On the elliptic side of a parabolic point $p \in K$, Theorem 3.1 gives a Levi-flat tube attached to $K$. On the hyperbolic side, Theorem 3.2 gives no attachment. The boundary of the tube must therefore lie exactly at $p$. This abrupt change in the hull's geometry produces a corner or singularity at $p$. The exact nature of the singularity depends on the Jöricke index at $p$; for index $k = 0$ the corner is sharp, while for $k > 0$ the onion shells smooth the transition.\end{proof}

\section{Examples}

\subsection{Purely Parabolic Knot}

Consider $f(z,w) = \frac{1}{2}(\bar{z}^2 + \bar{w})$ on $S^3$. By [2], the complex tangents form a great circle $K = \{z=0, |w|=1\}$  with Bishop invariant $\gamma = \frac{1}{2}$ everywhere. By the parabolic theorem above, the hull $\hat{M}$   is either trivial or exhibits an onion structure. A direct computation shows the Jöricke index vanishes, so $\hatM = M$ locally.

\subsection{Linked Circles with Mixed Types}

Consider the embedding from Example 5 of [2] with two linked circles:
\begin{itemize}
\item $K_1$: elliptic ($\gamma \approx 0.09$, $\theta > \frac{\pi}{2}$)
\item $K_2$: hyperbolic ($\gamma > \frac{1}{2}$, $\theta < \frac{\pi}{2}$)
\item Linking number $= 2$
\end{itemize}
By the elliptic theorem, a Levi-flat sheet $\mathcal{H}$ attaches to $K_1$. By the hyperbolic theorem, no sheet attaches to $K_2$. Because the components are linked, $\mathcal{H}$ wraps around $K_2$ twice. The boundary of $\mathcal{H}$ thus contains both $K_1$ and, after winding twice, $K_2$.

\subsection{The $\alpha$-Family with Varying $\alpha$}

Take $f(z,w) = \alpha(w)\bar{z}^2 + \bar{w}$ where $\alpha(w)$ varies with $w$ such that $|\alpha(w)|$ crosses $\frac{1}{2}$ at two points. Then $K = \{z=0\}$ is a single knot with:
\begin{itemize}
\item Elliptic arcs where $|\alpha(w)| < \frac{1}{2}$;
\item Hyperbolic arcs where $|\alpha(w)| > \frac{1}{2}$;
\item Parabolic points where $|\alpha(w)| = \frac{1}{2}$.
\end{itemize}
The hull develops corners at the parabolic points, separating regions where discs attach (elliptic) from those where they do not (hyperbolic).

\section{A Hull as a hat capped by two onions} 
We now construct an explicit embedding of $S^3$ into $\mathbb{C}^3$ with complex tangents forming a single knot $K$ that consists of an elliptic arc, a parabolic point, and a hyperbolic arc. This example illustrates the transitioning between between the three types and shows how the holomorphic hull $\widehat{M}$ changes from a Levi-flat tube to an onion cap on top of $M$ finally to nothing. Let $K \subset S^3$ be the great circle given by $\{z = 0, |w| = 1 \} \subset \mathbb{C}^3$ parametrized by $w = e^{i\phi}$ with $\phi \in [0, 2\pi)$.  We now define:

$ \alpha(\phi) = \frac{1}{2} + \cos\phi$ 

Then: 
\begin{itemize}
\item For $\phi \in [0, \frac{\pi}{2})$, we have that: $\alpha(\phi) > \frac{1}{2}$ (elliptic type). \item At $\phi = \frac{\pi}{2}, \alpha(\phi) = \frac{1}{2}$ (parabolic point)
\item For \\$\phi \in (\frac{\pi}{2}, \frac{3\pi}{2})$. we have $\alpha(\phi) < \frac{1}{2}$ (hyperbolic type)
. \item At $\phi = \frac{3\pi}{2}$, we again have that: $\alpha(\phi) = \frac{1}{2}$  forming another parabolic point.
\item For  $\phi \in  (\frac{3\pi}{2}, 2\pi)$, we have that $\alpha(\phi) > \frac{1}{2}$ (elliptic type). 

\end{itemize} 
\par\ \par\
Consider then the embedding $F: S^3 \hookrightarrow \mathbb{C}^3$ given by:
$$ F(z,w) = (z, w, \alpha(\phi) \bar{z}^2 + \bar{w})$$
where $\phi$ is the angular coordinate of $w$. By our construction in [2], the complex tangents of  $F$ form exactly the great circle $K$ 
\par\ \par\
\subsubsection{The Holomorphic hull} 
\begin{itemize} \item On the elliptic arcs $\phi \in [0, \frac{\pi}{2}) \cup  (\frac{3\pi}{2}, 2\pi)$), by our previous theorem we have  two connected Levi-flat tubes attached to K (by onions). 
\item At the parabolic points $\phi = \frac{\pi}{2}, \frac{3\pi}{2}$, Theorem 3.3 gives a single Levi-flat disc (onion) capping the adjacent tube at each point. Hence, we get two onion caps for the analytic tube  overseeing the elliptic arc we had above.
\item On the hyperbolic arc $\phi \in  (\frac{\pi}{2}, \frac{3\pi}{2})$, we know that there is no extra addition the hull. 
\item Therefore, the structure of the hull is given as an analytic tube with two onions at the boundaries, in essence giving the structure of an analytic "hat" with two "caps of onions" on the boundaries of the tube that is overseeing the elliptic arc of complex tangents on the manifold $M$.
\item Thus the structure of the hull is an analytic tube \\$T$ attached along the elliptic arc, with two onions $D_1$ and $D_2$ capping its two ends at the parabolic points of the elliptic arc. We can write this then as: $\widehat{M} = M \cup T \cup D_1 \cup D_2 \cup \emptyset$ where T is the analytic tube attached along the elliptic arc, and $D_1, D_2$ are the Jörickee discs (onions) at the parabolic points and $\emptyset$ is the contribution from the hyperbolic points (nothing).
\end{itemize} 
\begin{quote}
    
This example shows the transition from elliptic (tube) to parabolic (cap) to hyperbolic (nothing) on a single knot. The Jöricke index $k = 1$ gives a single shell; higher indices would give nested shells. 
\end{quote} 

\section{Conjectures and Open Problems}

\begin{conjecture}[Linking and Hull Containment]
Let $K_1$ be elliptic and $K_2$ hyperbolic in a link $L = K_1 \cup K_2$ with linking number $\ell k(K_1, K_2) \neq 0$. Then $K_2 \subset \hatM$. If $\ell k = 0$, then $K_2 \not\subset \hatM$.
\end{conjecture}

\begin{conjecture}[Isolated Complex Tangents]
An isolated complex tangent to an embedding $M^3 \hookrightarrow \C^3$ is necessarily degenerate. Moreover, a small perturbation resolves it into an elliptic or hyperbolic circle.
\end{conjecture}

\begin{problem}
Determine the precise relationship between the Jöricke index at a parabolic point and the topology of the resulting onion structure. Does the index correspond to the number of nested shells?
\end{problem}

\begin{problem}
Extend the results of this paper to higher dimensions: embeddings $M^{2n-1} \hookrightarrow \C^{2n-1}$ for $n \geq 3$.
\end{problem}
\begin{quote}
    
\end{quote}
\begin{quote}
    Note that we already extended our results to higher dimensional spheres and Heisenberg groups: $\mathbb{H}^{2n-1} \hookrightarrow \mathbb{C}^{2n-1}$ in our paper [3].
\end{quote}

\end{document}